\def\accentsfrancais{applemac}
\def\R{\mathbb{R}}
\def\RR{\mathbb{R}}
\def\Q{\mathbb{Q}}
\def\QQ{\mathbb{Q}}
\def\N{\mathbb{N}}
\def\Z{\mathbb{Z}}
\newcommand{\BE}{\begin{equation}}
\newcommand{\EE}{\end{equation}}
\newcommand{\al}{\alpha}
\newcommand{\x}{x_0}
\newcommand{\pqn}{\frac{p_n}{q_n}}
\newcommand{\TB}{\mathfrak{B}}
\newcommand{\be}{\beta}
\newcommand{\hun}{h^1_B}
\newcommand{\cgot}{{\mathfrak c}}
\newsavebox{\fmbox}
\newenvironment{fmpage}[1]
 {\begin{lrbox}{\fmbox}\begin{minipage}{#1}}
 {\end{minipage}\end{lrbox}\fbox{\usebox{\fmbox}}}
\def\eps{\varepsilon}
\theoremstyle{plain}
\newtheorem{thm}{Theorem}
\newtheorem{prp}{Proposition}
\newtheorem{lem}{Lemma}
\newtheorem{df}{Definition}
\newtheorem{cor}{Corollary}
\theoremstyle{remark}
\newcommand{\BL}{\begin{lem}}
\newcommand{\EL}{\end{lem}}
\newcommand{\BC}{\begin{cor}}
\newcommand{\EC}{\end{cor}}
\newcommand{\BP}{\begin{prp}}
\newcommand{\EP}{\end{prp}}
\newcommand{\BD}{\begin{df}}
\newcommand{\ED}{\end{df}}
\DeclareMathOperator*{\supess}{sup\,ess}
\title{Multifractal analysis of the  Brjuno function}
\author{St\'ephane Jaffard   and Bruno Martin} \thanks{ 
       Addresses: { \bf St\'ephane Jaffard}: Universit\'e Paris Est, Laboratoire d'Analyse et de Math\'ematiques Appliqu\'ees, CNRS UMR 8050, UPEC,  Cr\'eteil, France, 
       {\tt jaffard@u-pec.fr}
\\ 
      { \bf  Bruno Martin}: Universit\'e du Littoral C\^ote d'Opale, EA 2797, Laboratoire de Math\'ematiques Pures et Appliqu\'ees Joseph Liouville,  F-62228 Calais, France,
{\tt Bruno.Martin@univ-littoral.fr}
} 
\date{}
\address{}
\email{}
\begin{document}

\begin{abstract}  The Brjuno function $B$  is a 1-periodic, nowhere locally boun\-ded function,  introduced by J.-C. Yoccoz because it encapsulates a key information concerning  analytic small divisor problems in dimension 1.  We show that   $T^p_\al$ regularity,  introduced by  Calder\'on and Zygmund, is the only one which is relevant in order to unfold the pointwise regularity properties of $B$; we  
 determine its $T^p_\al$ regularity at every point and show that it is  directly related  to the irrationality exponent $\tau (x)$: its $p$-exponent at $x$ is exactly $1/\tau (x)$.   This new example of multifractal function puts into light a new link  between  dynamical systems and fractal geometry.    Finally we also determine the H\"older exponent of a primitive of $B$. 
 \end{abstract}

\maketitle

{ \small {\em Keywords:} Brjuno function, continued fractions, irrationality exponent, Diophantine approximation, $T^p_\al$ regularity,  $p$-exponent, multifractal analysis.\\ 

{\em AMS classification:}   11A55, 11J70, 11K50,  26A15, 26A30, 28A80, 37F50.} \\

 { \bf Acknowledgements:  }The authors   thank Yves Meyer  and   the  anonymous referee for many remarks on  previous versions of this text. 
\\

St\'ephane Jaffard is  supported by  ANR  Grant MULTIFRACS ANR-16-CE33-0020 and Bruno Martin by ANR Grant MUDERA ANR-14-CE34-0009.

\section{Introduction}\label{sec:intro}
Let $x$ be an irrational number in $(0,1)$, and   let
  \BE \label{contfracexp} x = [ 0; a_1, \ldots, a_n , \ldots ]\EE
denote its  continued fraction expansion.   
The convergents $p_n / q_n$ of $x$ are 
 \[ \frac{p_n}{q_n} = \cfrac{1}{a_1+\cfrac{1}{a_2+\cfrac{1}{\ddots+\cfrac{1}{a_n}}}} \]
(following the standard tradition,  we will not write explicitly the dependency of $p_n$, $q_n$ and $a_n$ in $x$ except when it will be  needed). 
The Brjuno function at $x$  is 
 \BE \label{B2}  B(x) =  \sum_{n=0}^\infty | p_{n-1} - q_{n-1} x|  \log \left( \frac{p_{n-1} - x \; q_{n-1}}{q_n x -p_n}\right) ,  \EE
 where,   by convention,  
 \[ (p_{-1}, q_{-1}) = (1,0), \quad (p_{0}, q_{0}) = (0,1), \quad \mbox{ and} \quad (p_{1}, q_{1}) = (1,a_1),\]
 so that the first term in \eqref{B2} is $\log (1/x)$. 
The Brjuno function is extended by periodicity on  $\R - \Q$.

The Brjuno function plays an important role  in the theory of holomorphic dynamical systems: It was first introduced by J.-C. Yoccoz, see \cite{Yo}, because of the   information that it yields concerning  analytic small divisor problems in dimension 1:  Following C. L. Siegel \cite{Sieg}, A. D.  Brjuno \cite{Brju} and J.-C. Yoccoz \cite{Yo},  germs with linear part $e^{2i\pi  x}$  are analytically conjugate to a rotation if and only if $x$ is a {\em Brjuno number}, i.e. if   $x\notin \Q $  and   if  the series defining $B(x)$ is convergent.  
 
 B. Marmi, P. Moussa and J.-C. Yoccoz  determined the optimal global regularity of $B$, showing that  it belongs to  $  BMO$, see \cite{MaMoYo}. 
 The  {\em Marmi-Moussa-Yoccoz conjecture}  is another regularity problem related with the Brjuno function: It states that the sum of $B$ and  the logarithm of the conformal radius of the Siegel disk of a monic quadratic polynomial is $C^{1/2}$, see \cite{MaMoYo} p. 267.
Key steps towards its resolution 
   have been obtained by X. Buff, D. Cheraghi and A.  Ch\'eritat, see \cite{BuChe,CheChe}.
 Local properties of $B$  were recently investigated by M. Balazard and B. Martin in \cite{BaMa}:  They showed that its Lebesgue points  are precisely the Bruno numbers,  and they obtained precise estimates of the average of $B$ over an interval, which will play a key-role in our study, see e.g \eqref{BMfond}.

 We will complement these regularity results by performing the {\em multifractal analysis}  of the Brjuno function. The   multifractal analysis of a function $f$ usually consists into three steps:
 \begin{itemize}
\item Choose  a {\em pointwise regularity  exponent} compatible with the  global function space setting where $f$ is considered, 
\item  determine the value taken by this  exponent at every point,  
\item compute the Hausdorff dimensions ${ \mathcal D}_f(H)$ of the sets of points where this exponent takes a given value $H$. 
\end{itemize}
The function $H \rightarrow { \mathcal D}_f(H)$ 
is  the {\em multifractal spectrum} of $f$.
Multifractal analysis  has  also been developped in the setting of  measures and even of distributions, see e.g. \cite{BBBFHJMP,Mey,Pesin} and references therein.

Several clues indicate that the tools supplied by multifractal analysis are relevant for the  Brjuno function: First it is a {\em cocycle} under the action of  $PGL (2, \Z)$, as a consequence of  the remarkable functional equations
\begin{align*}
\forall x \in \R\setminus\Q,& \quad B(x+1) =B(x), \\ 
\forall x \in(0,1)\setminus\Q,& \quad B(x) = \log(1/x) + xB(1/x),
\end{align*}
see  \cite{MaMoYo2,MaMoYo3}. 
This property is reminiscent of the  behavior of the Jacobi theta function  under modular transforms, which  is  the key ingredient in the determination of the pointwise exponent of the {\em non-differentiable Riemann function}  ${ \mathcal R} (x) = \sum \sin (\pi n^2x) / n^2$  \cite{JaRie}, and  of  related trigonometric series \cite{SeuUb}. Other trigonometric series also related to modular forms have  been studied  by  I. Petrykiewicz in  \cite{Petry1,Petry2}. Finally, \eqref{B2}  also indicates  that  Diophantine approximation properties  should  play a role in the  local regularity properties of $B$.  
 This was  the case for  ${ \mathcal R}$ \cite{JaRie}, and  several of its generalizations   investigated by   F. Chamizo, I. Petrykiewicz, S. Ruiz-Cabello, and A.  Ubis  in  \cite{Cha,ChaPeRu,ChaUb}, and by T.~Rivoal and J.~Roques in \cite{RivRoq}. Note that extremely few explicit deterministic functions playing an important role in mathematics have been proved to have a non-trivial  multifractal spectrum: Most results in multifractal analysis are either  of probabilistic or generic nature. Another motivation for performing such an analysis on $B$ is that, beyond the important role played by  this function, our result establishes a new relationship between holomorphic dynamical systems on one side, and real analysis and geometric measure theory on the other.

In order to perform the multifractal analysis of $B$, a first  question  is to determine a pointwise exponent  fitted to its study.
As mentioned above, this will be a consequence of the  choice of  a right function space setting. 
The two notions of pointwise regularity  most commonly used are the  {\em H\"older exponent},  defined for locally bounded functions  and the {\em local dimension},  defined for positive  Radon measures (see Section \ref{sec:addit}). However, these exponents are not fitted to the analysis of the Brjuno function for the following reasons. First, $B$ is not locally bounded (i.e. does not coincide a.e. with a locally bounded function), because of the logarithmic singularities in \eqref{B2} centered at all rational points (the series \eqref{B2} is positive so that cancellations between terms cannot occur). As regards the local dimension, since $B$ is positive, we can interpret it as the density of a positive Radon measure, but its local dimension is constant  so that it is not adapted to measure  the variations of regularity that exist  in $B$. On other hand, these variations will be put into light through the use of a third notion of pointwise regularity, introduced by Calder\'on and Zygmund see \cite{CZ}, which is fitted to the study of functions that belong to $L^p_{loc}$.  
 
\begin{df}\label{df:Calder} Let $p \in [1, +\infty)$ and $\al \geq -1/p$.  Let $f\in L^p_{loc}(\R)$, and $x_0\in\R$;  $f$  belongs to $T^p_\al (x_0)$ if there exist $C >0$  and a polynomial $P$ of degree less than $\al$ (with $P\equiv 0$ if $\al<0$)  such that, for $\rho$ small enough,   
\BE\label{deux}  
\left(\frac{1}{2\rho} 
\int_{x_0-\rho}^{x_0+ \rho}|f(x)-P(x-x_0)|^p dx 
\right)^{1/p}
\leq C\rho^\al . 
\EE
The
$p$-exponent of  $f$ at $x_0$ is 
\[
h^{p}_f
(x_0)= \sup \{ \al : f \in T^p_\al (x_0)\}.
\]
\end{df}

{ \bf Remarks:} 

\begin{itemize}
\item The normalization chosen in (\ref{deux}) is such that  the simple { \em cusp singularities }  $|x-x_0|^\al$ have an H\"older and a $p$-exponent which take the same value $\alpha$ at $x_0$ (for any $p \geq 1$). 
\item Definition \ref{df:Calder}    is a natural substitute for pointwise H\"older regularity when
 functions in 
$L^p_{loc}$ are considered. In particular, the $p$-exponent can take negative values  down to $-d/p$, and typically  allows to deal with  singularities  which are locally of the form 
${1}/{|x-x_0|^\gamma }$  for $\gamma < d/p$.
\item The condition on the degree of $P$ (which is required to ensure uniqueness of $P$) implies that, if $\al \leq 0$, then  $P =0$.  
\end{itemize}

Let $p=1$; if $f\in L^1_{loc}$, and if the left-hand side of \eqref{deux} is a $o(1)$, then 
$x_0$ clearly is a Lebesgue point of $f$ and the constant term of $P$ is the Lebesgue value of $f$ at $x_0$, i.e. is 
\BE\label{leb} 
\lim_{\rho\rightarrow 0}\;\frac{1}{2\rho}\int_{x_0-\rho}^{x_0+ \rho}f(x)\;dx. 
\EE
Indeed, if we denote by $D$ this constant term, 
\[
 \left| \frac{1}{2\rho}\int_{x_0-\rho}^{x_0+ \rho}  f(x)   dx -D \right| \leq \frac{1}{2\rho}\int_{x_0-\rho}^{x_0+ \rho}  \left|   f(x)-D \right|   dx = o (1). \] 

It follows that the 1-exponent measures the rate of convergence of the local averages \eqref{leb} in the Lebesgue theorem. Therefore the determination of the 1-exponent that we will perform can be interpreted as a  quantitative sharpening of the theorem of M. Balazard and B. Martin stating that every Brjuno point is a Lebesgue point of the Brjuno function. 
This  is our main motivation for  focusing  on the case $p=1$.  However,  in Section \ref{secpexps} we will deal with arbitrary $p$s (and conclude that, at any point, the $p$-exponent is independent of $p$). The 1-exponent of $B$ at a point will be related with its {\em  (Diophantine) irrationality exponent}.  

\BD 
Let $\x\notin \Q$, 
and $p_n / q_n$  the sequence of convergents of the continued fraction expansion of $\x$. Let $\tau_n (x_0) $  be defined by
\[
\left|\x-\pqn\right| =\frac{1}{q_n^{\tau_n (x_0) }}. 
\]
The irrationality exponent (also called Diophantine approximation exponent or Diophantine order) of $x_0$ is 
\[ 
\tau (x_0) = \limsup_{n \rightarrow + \infty}  \tau_n (x_0) . 
\]
\ED

If $x_0$ is irrational, then  $|x_0 -{p_n}/{q_n}| < {1}/{q_n^2}$,  so that $\tau_n (x_0)>2$, and $\tau (x_0) \geq 2$.
Let us recall the following equivalent definition for the irrationality exponent of $x_0$ : $\tau(x_0)$ is the supremum of the $\tau\in\R$ such that there exists infinitely many $(p,q)\in\Z\times \N^*$ such that 
$ |x-p/q| \le 1/q^{\tau}$. 

\begin{thm}\label{thm:SpecB}
If $x_0 \in \QQ$, then $h^1_B (x_0) = 0$. Otherwise, 
\[ h^1_B (x_0) =  \frac{1}{\tau (x_0)}. 
\] 
\end{thm}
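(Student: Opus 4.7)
The plan is to reduce the analysis to the behavior of $B$ near rationals, where the Balazard--Martin averaging estimate \eqref{BMfond} supplies the essential input. Heuristically, near a reduced rational $p/q$ the Brjuno function resembles a logarithmic singularity of amplitude $1/q$,
\[
B(x)\;\sim\;\tfrac{1}{q}\log\tfrac{1}{|x-p/q|} + \text{(smoother remainder)},
\]
which is consistent with the series \eqref{B2}, whose $n$-th term carries the weight $|p_{n-1}-q_{n-1}x|\approx 1/q_n$ near its pole $p_n/q_n$, and with the functional equations. I would first extract a precise statement of this from \eqref{BMfond}: for $r=p/q$ reduced and $\rho$ small,
\[
\frac{1}{2\rho}\int_{r-\rho}^{r+\rho}B(x)\,dx\;=\;\tfrac{1}{q}\log(1/\rho) + O(1),
\]
together with an analogue on off-center intervals. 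This is the workhorse estimate.

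\emph{Rational case.} At $x_0=p/q$ this gives $\tfrac{1}{2\rho}\int_{x_0-\rho}^{x_0+\rho}|B(x)-c|\,dx = \tfrac{1}{q}\log(1/\rho)+O(1)$ for any constant $c$. Since this diverges as $\rho\to 0$, one has $B\notin T^1_\alpha(x_0)$ for $\alpha\ge 0$; conversely, $\log(1/\rho)=o(\rho^\alpha)$ for every $\alpha<0$, so $B\in T^1_\alpha(x_0)$ for each such $\alpha$ (with $P\equiv 0$). Hence $h^1_B(x_0)=0$.

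\emph{Irrational case, upper bound.} Write $\tau:=\tau(x_0)$ and choose a subsequence of convergents $p_{n_k}/q_{n_k}$ with $\tau_{n_k}(x_0)\to\tau$. Set $\rho_k:=2|x_0-p_{n_k}/q_{n_k}|=2q_{n_k}^{-\tau_{n_k}}$, so that $[x_0-\rho_k,x_0+\rho_k]$ contains $p_{n_k}/q_{n_k}$ in its interior. The $\tfrac{1}{q_{n_k}}$-amplitude log singularity at $p_{n_k}/q_{n_k}$ then forces
\[
\frac{1}{2\rho_k}\int_{x_0-\rho_k}^{x_0+\rho_k}|B(x)-c|\,dx\;\ge\;C\,\tfrac{\log(1/\rho_k)}{q_{n_k}}\;\sim\;\rho_k^{1/\tau_{n_k}}\log(1/\rho_k)
\]
for any constant $c$ (using $1/q_{n_k}=\rho_k^{1/\tau_{n_k}}/2$). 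This exceeds $\rho_k^\alpha$ for every $\alpha>1/\tau_{n_k}$, so $h^1_B(x_0)\le 1/\tau_{n_k}$, and $k\to\infty$ yields $h^1_B(x_0)\le 1/\tau$.

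\emph{Irrational case, lower bound.} Fix $\eps>0$. By definition of $\tau$, every rational $p/q$ in $[x_0-\rho,x_0+\rho]$ satisfies $q\ge Q_\rho:=\rho^{-1/(\tau+\eps)}$ once $\rho$ is small enough. Take $c:=B(x_0)$, the Lebesgue value, which is finite since $\tau(x_0)<\infty$ forces $x_0$ to be a Brjuno number, and decompose the contributions to $\tfrac{1}{2\rho}\int|B(x)-c|\,dx$ according to dyadic blocks of denominators $q\in[Q,2Q]$ with $Q\ge Q_\rho$. Each block contains $O(\rho Q^2+Q)$ rationals in the interval, each producing a log singularity of coefficient $\le 1/Q$; summing their $L^1$-contributions via \eqref{BMfond} yields a geometrically decaying series in $Q$, with total $O(\rho^{1/(\tau+\eps)}\log(1/\rho))\le C\rho^{1/\tau-2\eps}$. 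The smoothly-varying contributions from rationals outside the interval are handled by the same estimate applied to nested intervals. Letting $\eps\to 0$ concludes; the Liouville case $\tau=\infty$ is contained in this argument with $1/\tau=0$. I expect the main obstacle to be the bookkeeping in this last step: rationals whose singularity centers sit near the endpoints of $[x_0-\rho,x_0+\rho]$ need a careful truncation, and uniformly controlling the smooth remainders without letting logarithmic factors accumulate is where \eqref{BMfond} must be invoked most delicately.
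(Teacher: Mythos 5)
Your outline gets the rational case and the overall architecture (upper bound from the convergents, lower bound from an $L^1$ estimate around a Brjuno point) right, but both halves of the irrational case have genuine gaps. For the upper bound, the step ``the $\tfrac{1}{q_{n_k}}$-amplitude log singularity forces $\tfrac{1}{2\rho_k}\int_I|B-c|\,dx\ge C\log(1/\rho_k)/q_{n_k}$ for any constant $c$'' is exactly the point that needs an argument, and none is given: subtracting a constant close to the mean value of the singularity over the window (roughly $\tilde B(r)+\tfrac1q\log(1/\rho_k)$) absorbs the logarithm and leaves a deviation of order $1/q$ only, so the bound as stated is doubtful, and in any case \eqref{BMfond} alone, which concerns the signed average $\tfrac1h\int_r^{r+h}B$, does not yield it. What is true and sufficient is an oscillation bound: applying \eqref{BMfond} at scales $h$ and $h/2$ cancels every constant and leaves $\tfrac{\log 2}{q}$; the paper packages this as a Haar-wavelet coefficient (Lemma \ref{lemquat}) and converts a putative modulus of continuity into a bound on that coefficient (Lemma \ref{lem3}). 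Some version of this cancellation device is indispensable; your conclusion $h^1_B(x_0)\le 1/\tau(x_0)$ survives, but not by the route you describe.

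The lower bound at Diophantine points is the hard half, and the dyadic-in-$q$ bookkeeping you propose does not constitute a proof. The ansatz $B(x)\sim\tfrac1q\log\tfrac{1}{|x-p/q|}+\text{(smoother remainder)}$ is never made precise: \eqref{BMfond} is an averaged statement attached to a single reduced rational, valid only for $|h|<2/(3q^2)$, and the ``remainder'' near $p/q$ consists precisely of the other singularities, so summing the blocks $q\in[Q,2Q]$ is circular without an actual decomposition of $B$ into individually estimable pieces; moreover the overlaps of the singularities' domains of influence and the additive $O(Q)$ term in the count of fractions with denominator in $[Q,2Q]$ are not controlled. The paper's proof replaces this by the continued-fraction decomposition $B=\sum_k\gamma_k$ with $\gamma_k(x)=\beta_{k-1}(x)\log(1/A^k(x))$, locates $I$ inside the cylinder $\cgot[a_1,\ldots,a_K]$, and estimates $\int_I|\gamma_k-\gamma_k(x_0)|$ separately in the three regimes $k<K$, $k=K$, $k>K$ using Propositions 7, 9 and 10 of \cite{BaMa} (Lemma \ref{esti-integrals} and Proposition \ref{propder}); that structured decomposition is the missing idea. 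Finally, at non-Brjuno irrationals $B(x_0)=+\infty$, so your choice $c=B(x_0)$ is unavailable and the inequality $h^1_B\ge 0$ there requires a separate argument (the paper derives it from $B\in BMO$ via John--Nirenberg).
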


{\bf Remark:} Since almost every real number $x$ satisfies $\tau (x) =2$ (see e.g. Chap. 10.3 of  \cite{Falconer:2003oj})
it follows that $h^1_B$ takes the value $1/2$ almost everywhere. In the opposite direction, since quasi-every real number (in the sense of Baire categories) satisfies $\tau (x) =+ \infty$,   see \cite{Oxtob},  it follows that $h^1_B$ vanishes  quasi-everywhere (i.e. vanishes at least on a countable intersection of open dense subsets).

We now derive the consequence of Theorem \ref{thm:SpecB} for multifractal analysis. Let $\dim (A) $ denote the Hausdorff dimension of the set $A$, with the convention $\dim(\emptyset )=-\infty$.
  
\begin{df}\label{df:spectrum} 
Let   $p \in [1, +\infty)$  and  $f\in L^p_{loc}(\R)$. The level sets of $h^{p}_f$, denoted by $E^p_H $, are 
\[ \forall H \in \left[ -\frac{1}{p}, + \infty \right], \qquad  E^p_H = \{ x: \;\; h^{p}_f (x) =H\}. 
\]
The  $p$-spectrum  of $f$  is the function 
${\mathcal D}_f^p: \; \left[ -{1}/{p}, + \infty \right]  \to   \RR \cup \{-\infty\} $
defined by 
\[
{ \mathcal D}^p_f (H) =  \dim \; ( E_H^p  ). 
\]
\end{df}

In contradistinction with the H\"older case, few $p$-spectrums have been determined:  Let us mention the characteristic functions of some fractal sets  \cite{JaMel} and random wavelet series \cite{Bandt};   generic results (in the Baire and prevalence settings) for functions in a Sobolev space were obtained by A. Fraysse  \cite{Fraysse};  recently, 2-exponents of trigonometric series which are not locally bounded were obtained by S.~Seuret and A.~Ubis  \cite{SeuUb}.  

The precised formulation of Jarnik's theorem states that 
\BE \label{jar}
 \dim \left\{ x : \, \tau (x) = t \right\} = \frac{2}{t}, \EE
see e.g. \cite{JaRie}.  Therefore the 1-spectrum of $B$ will therefore follow from Theorem \ref{thm:SpecB}.  

\begin{cor}\label{corspec}
The  $1$-spectrum  of $B$  is 
\BE\label{corospec} 
{\mathcal D}^1_B(H)=
\begin{cases}
2H & \mbox{if } H\in [0,1/2],\\
-\infty & \mbox{else. }
\end{cases}
\EE
\end{cor}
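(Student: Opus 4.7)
The plan is to derive the corollary directly from Theorem \ref{thm:SpecB} combined with Jarnik's theorem \eqref{jar}, by describing each level set $E^1_H$ in terms of level sets of the irrationality exponent $\tau$. The main work is simply bookkeeping on four cases, and the only delicate point is the behaviour at the two endpoints $H=0$ and $H=1/2$.

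First I would read off from Theorem \ref{thm:SpecB} that for any irrational $x_0$ one has $h^1_B(x_0)=1/\tau(x_0)\in (0,1/2]$ (since $\tau(x_0)\geq 2$), while $h^1_B(x_0)=0$ on $\QQ$. This immediately implies that $E^1_H=\emptyset$, hence $\mathcal{D}^1_B(H)=-\infty$, as soon as $H<0$ or $H>1/2$.

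For $H\in(0,1/2]$, the theorem gives
\[
E^1_H=\{x\in\R\setminus\QQ:\ \tau(x)=1/H\},
\]
so Jarnik's theorem \eqref{jar} applied with $t=1/H$ yields exactly $\dim E^1_H = 2/(1/H)=2H$. This covers the interior of the spectrum as well as the endpoint $H=1/2$, which corresponds to $\tau(x)=2$ and, as observed in the remark following Theorem \ref{thm:SpecB}, is satisfied Lebesgue-almost everywhere, consistently giving dimension $1=2H$.

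The only case that needs a tiny extra argument is $H=0$. Here
\[
E^1_0 = \QQ \,\cup\, \{x\in\R\setminus\QQ:\ \tau(x)=+\infty\}.
\]
The set $\QQ$ is countable so has Hausdorff dimension $0$. For the second piece, I would note that $\{x:\tau(x)=+\infty\}\subset\{x:\tau(x)\geq t\}$ for every $t>2$, and Jarnik's theorem (in its usual form) gives $\dim\{x:\tau(x)\geq t\}=2/t$; letting $t\to+\infty$ forces the Hausdorff dimension of $\{\tau=+\infty\}$ to be $0$. Combining these two contributions, $\dim E^1_0 = 0 = 2\cdot 0$, which closes the case and yields the stated formula \eqref{corospec}. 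No step here is really an obstacle; the only thing to be careful about is to apply Jarnik in the ``$\geq t$'' form rather than the ``$=t$'' form when handling $H=0$.
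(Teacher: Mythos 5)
Your argument is correct and is exactly the derivation the paper announces in the introduction: combine Theorem \ref{thm:SpecB} with \eqref{jar}, dispose of the empty level sets, and check the endpoints separately. Your treatment of $H=0$ (countability of $\QQ$ together with the ``$\tau(x)\ge t$'' form of Jarnik for the Liouville numbers) is precisely what is needed there. The one place where you diverge from the paper's written proof is that you quote \eqref{jar} as a black box for the \emph{exact} level sets, whereas the paper regards precisely this point as the whole content of the corollary's proof. The classical Jarnik theorem computes $\dim E_\tau=2/\tau$ for the limsup sets $E_\tau$ of $\tau$-well-approximable numbers; the level set $\{x:\tau(x)=t\}$ contains $E_t\setminus\bigcup_{\tau>t}E_\tau$ and is contained in every $E_\tau$ with $\tau<t$, so the upper bound $2/t$ is immediate, but the lower bound requires knowing that removing the dense uncountable set $\bigcup_{\tau>t}E_\tau$ from $E_t$ does not decrease the dimension. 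The paper settles this with Proposition \ref{propjar}: by \eqref{jar2} the set $E_t$ carries positive modified Hausdorff measure $mes^{2/t,2}$, each $E_\tau$ with $\tau>t$ has dimension $2/\tau<2/t$ and is therefore $mes^{2/t,2}$-null, and by monotonicity the union over $\tau>t$ reduces to a countable union, so the difference still has positive $mes^{2/t,2}$-measure and hence dimension at least $2/t$. If \eqref{jar} may legitimately be cited in the exact-level-set form (the paper does state it that way, with a reference), your proof is complete; otherwise this measure-theoretic step is what must be added for $H\in(0,1/2)$ --- note that you already make the corresponding ``$=$ versus $\ge$'' distinction at $H=0$, but not in the main case.
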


{ \bf Remark:} Since \eqref{jar} also holds after restricting to  the points $x$ inside a nonempty open interval, it follows that the multifractal spectrum of $B$ restricted to any interval $(a,b)$  of positive length is also given by \eqref{corospec}. Following \cite{BaDuJaSeu}, $B$  is an {\em homogeneous multifractal function}. \\

Theorem \ref{thm:SpecB} is proved in Section \ref{sec2}.  The computation of the 1-exponent is sharpened in  Section \ref{secbad} where the exact modulus of continuity of $B$ at {\em badly approximable numbers} is determined. Results concerning other notions of pointwise regularity are grouped in Section \ref{sec:addit}. Finally, we mention related open problems in Section \ref{sec:conclu}.

\section{Determination of the $1$-exponent  of $B$ }
\label{sec2}

The  fact that $B \in BMO$ implies a uniform lower bound on the 1-exponent. Indeed it follows  from
 the John-Nirenberg inequality (or from Proposition 3 of \cite{BaMa}) that  
\BE \label{JohnNin} \exists C >0, \; \forall x,y :  \;\;  |x-y|\leq \frac{1}{2}, \qquad { \Big| \int_x^y  B(t)   \; dt \Big|}  \leq C |x-y| \log \left( \frac{1}{|x-y| }\right)  
\EE
(here and in the following, the value of the constant $C$  may change from one line to the next).
Thus, for $h < 1/2$, 
\[ \forall D, \qquad  \frac{1}{2h} \int_{x_0-h}^{\x +h} | B(x) -D | dx \leq C  \log (1/h ) 
\]
and finally, 
\BE \label{regunif} 
\forall \x , \qquad \hun  (\x ) \geq 0. 
\EE

Following \cite{BaMa}, it will be convenient to define a  function $\tilde{B}$  at rationals in the following way:  If $x_0 \in (0,1)\cap\Q $,  then the continued fraction expansion \eqref{contfracexp} of $x_0$ stops at a rank $N$, 
and 
\[ \tilde{B}(x_0) =   \sum_{n=0}^{N-1}\;  | p_{n-1} - q_{n-1} x_0| \;  \log \left( \frac{p_{n-1} - x_0 \; q_{n-1}}{q_n x_0 -p_n}\right)  ; 
\]
for instance,   for $N =1$,  $\tilde{B}(1/k) = \log k$. 
 
The  regularity of $B$ at rationals is a consequence of the following estimate of Balazard and Martin (Proposition  12 of \cite{BaMa}):
Let $r= p/q$ with $p\wedge q =1$; 
\BE \label{BMfond} 
\mbox{ if} \;  | h | < \frac{2}{3 q^2}, \;
\frac{1}{h } \int_r^{r+h} \hspace{-2mm} B(x) dx = \frac{\log (e / q^2 | h| )}{q} + \tilde{B}(r)  + { \mathcal O}    \left(  qh  \log \left(\frac{1}{q^2 |h|}\right)  \right) 
\EE
where the ${ \mathcal O} $ is uniform  (in $p$, $q$ and $h$).  In particular,  if $\x = p/q$ is  rational, then $\forall D$,    for $h$ small enough, 
\BE \label{irregrat}   \int_{x_0-h}^{\x +h} | B(x) -D | dx \geq  \frac{h}{2q} \log (1/h ) ,\EE
so that, at rationals $\hun (x_0) =0$. 
More precisely, by \eqref{JohnNin},  for $C$ large enough, the function $ C h\log (1/h)$ is a uniform   $1$-modulus of continuity of $B$ (which will be defined further at Definition \ref{defmodgen});
and, up to the multiplicative constant,  this   is optimal, because  it follows from \eqref{JohnNin} and \eqref{irregrat} that {$ h \log (1/h)$ is the order of magnitude of  the left hand side of \eqref{irregrat}}. 

 The regularity of $B$ at  Cremer numbers (i.e. at irrationals that are not Brjuno numbers)  follows from the fact that they are not  Lebesgue points, see   Proposition 14 of \cite{BaMa}.  
Thus, it follows from \eqref{leb} that 
$h^1_B (x_0) \leq 0$  and, using \eqref{regunif},   $h^1_B (x_0) = 0$. Therefore, from now on,  we can assume that $x_0$ is   a Brjuno number, so that $B(x_0)$ is  finite, and its   values (pointwise and  in the Lebesgue sense)  coincide. 
 
\subsection{Global and pointwise irregularity of the Brjuno function }

The idea for proving the irregularity of $B$ at Brjuno numbers  is to reinterpret \eqref{BMfond} as  implying that some of its wavelet coefficients  are large in the neighbourhood of the point considered, so that $B$ is irregular at those points.  We will need a  variant of the classical  wavelet criterium  (such as in  
 \cite{Jafcras}).

We assume in the following that $\psi$  is a bounded, compactly supported function  satisfying 
\BE \label{hyppsi}  \sup_{x\in \RR} | \psi (x) | \leq 1 \qquad \mbox{ and} \qquad  \int_{\RR} \psi (x) dx =0; \EE
such a function $\psi$ will be called an {\em admissible wavelet.} 
Let
\[ \forall a>0 ,\quad  b \in \RR, \qquad  \psi_{a,b } (x) = \psi \left ( \frac{x-b}{a}\right).
\]
If $f \in L^1_{loc} (\RR)$, the continuous wavelet transform of $f$ is 
\[ C_f(a, b ) = \frac{1}{a} \int_{\RR}  f(x) \psi_{a,b } (x)  dx. \]

In order to obtain sharp results, we need to extend the notion of $T^p_\al $ regularity to general moduli of continuity. We start by defining the   possible candidates: A function  $\theta: \;  \RR^+ \rightarrow \RR^+  $ satisfies  hypothesis ${ \mathcal H}$ if
\[ 
({ \mathcal H}) \quad  \left\{ \begin{array}{l}  \theta (0 ) =0 , \\  \theta    \mbox{ is  continuous  and non-decreasing {in a neighborhood of $0$}.}  \end{array}\right. 
\]

\BD\label{defmodgen} Let $\theta$ be a function satisfying ${ \mathcal H}$ and $f \in L^p_{loc}( \RR )$;  $\theta $ is a $p$-modulus of continuity of $f$  at $x_0$ if  
 there exists a polynomial $P$  such that, for $\rho$ small enough,   
\BE\label{modcont}  
\left( \int_{x_0-\rho}^{x_0+\rho} | f(x) -P( x-x_0)|^p dx 
\right)^{1/p}
\leq    \theta (\rho ). 
\EE
\ED
Note that $ T^p_\al $ regularity corresponds to  $\theta (\rho) =   C \rho^{\al + 1/p} $. 

\BL\label{lem3}
Let $\psi$ be an admissible wavelet, $p \in [1, + \infty]$ and  $f \in L^p_{loc} (\RR)$;  let  $\theta$ be a $p$-modulus of continuity of $f$ at $x_0$ satisfying 
\[ \exists C>0 , \; \forall \rho\in (0,1], \qquad  \theta (\rho ) \geq  C\rho^{1+ 1/p}. 
\]
Then 
\[ 
 supp \; (\psi_{a, b })  \subset [x_0- \rho, x_0+ \rho ] \;\Longrightarrow \;  | C_f(a, b  ) | \leq   2^{1-1/p}  \;  \theta (\rho ) \frac{\rho^{1-1/p}}{ a}. 
\]
\EL

This result will be used  for $p=1$ in order to prove the pointwise irregularity of the Brjuno function. 

\begin{proof}
The growth condition on $\theta$ implies that we can restrict to polynomials $P$ of degree 0. Since $\psi$ has a vanishing first moment,
\[ 
\forall D \in \RR, \qquad C_f(a, b  ) = \frac{1}{a} \int_{\RR}  (f(x)-D ) \psi_{a,b } (x)  dx. 
\]
Using \eqref{hyppsi}, we get  
\[ 
| C_f(a, b  ) |  \leq  \displaystyle\frac{1}{a} \int_{x_0- \rho}^{x_0+ \rho}  |f(x)-D |  dx 
\leq     \displaystyle  2^{1-1/p}  \; \frac{\theta (\rho ) }{a}   \rho^{1-1/p}.
\]
\end{proof}
 
Applying \eqref{BMfond}  to $h$ and $h/2$, we obtain that, if  $ 0< h  < {2}/{3 q^2}$, then 
\BE \label{difprim2} 
\frac{1}{h } \int B(x) H \left ( \frac{ x-r}{h}\right) dx  = \frac{\log 2}{q}  + { \mathcal O}    \left(  qh  \log \left(\frac{1}{q^2 h}\right)  \right) ,  
\EE
where $H  = 1_{[0, 1/2]} -  1_{ [1/2, 1]  } $ is the Haar wavelet. Hence the following result holds.

\BL \label{lemquat} Let $r = {p}/{q} $ with $p \wedge q =1$. If $0<  h  < {2}/{3 q^2}$, then 
\BE \label{tranOndB} 
\left| C_B \left(h, \frac{p}{q}\right) - \frac{\log 2}{q}  \right| \leq  \tilde{C}    qh  \log \left(\frac{1}{q^2 h}\right),  
\EE
where the wavelet used is the Haar wavelet and  the constant $ \tilde{C}$  is independent of $p,$ $q$ and $h$.
\EL 
We now introduce a notion of uniform irregularity   associated with moduli of continuity for $p =1$.
\BD 
Let $\theta$ be a   function satisfying ${ \mathcal H}$.  A function $f \in L^1_{loc} (\RR)$ is uniformly $\theta$-irregular if 
\[
\; \forall x_0,  \; \forall 
P, \;  \exists \rho_n \rightarrow 0 :  \quad 
\int_{x_0- \rho_n}^{x_0+ \rho_n}| f(x) -P (x-x_0) | \; dx  \geq  \theta (\rho_n)  . 
\]
\ED 
  
\BP \label{propirreg}  There exists $A >0$ such that  $B$  is uniformly $\theta$-irregular for   $\theta (\rho ) =  A \rho^{3/2}$; and this result is optimal (i.e. $\theta (\rho ) $ cannot be replaced by a $o(  \rho^{3/2})$).  
\EP  

The optimality of Proposition \ref{propirreg} will be proved in Section \ref{secbad} by considering  badly approximable numbers. 
  
\begin{proof} Let $x_0 \in \RR$. First note that, if $x_0 \in \QQ$, then the result follows from \eqref{irregrat}. If $x_0 \notin \QQ$, we apply \eqref{tranOndB}  to the sequence  
\[ 
r_n = \frac{p_n}{q_n}
\]  of convergents of $x_0$. We  now pick 
$h_n = {\eps }/{q_n^2 }$, where $\eps$ is positive and such that $\tilde{C} \eps \log (1/\eps ) \leq 1/4$  (where $ \tilde{C} $ is the constant in Lemma \ref{lemquat}). 
It follows that 
\[ C_B (h_n, r_n ) \geq \frac{1 }{4 q_n }.
\]
We now  apply Lemma \ref{lem3} with $a= h_n$, $b=r_n$ and
$ \rho_n =  \left| x_0 -r_n \right|  + h_n$; if $\theta$ is a 1-modulus of continuity at $x_0$, then 
\[
|C_B (h_n, r_n ) | \leq  \frac{\theta (\rho_n)}{ h_n },
\]
which implies that 
\[
\frac{1 }{4 q_n }  \leq \frac{\theta (\rho_n) }{ h_n } .
\]
Using that $\rho_n \leq 2/q_n^2 $ and $\theta $ is increasing, if follows that 
\[
\frac{\eps}{4 q_n^3}  \leq {\theta (\rho_n) }  \leq \theta (2/q_n^2 )= \frac{2A\sqrt{2}}{q_n^3}, 
\]
hence a contradiction if $A$ is small enough.
\end{proof}

Proposition \ref{propirreg} implies that the $1$-exponent  satisfies $ \forall x \in \RR$, $ h^1_B (x) \leq  {1}/{2}; $
thus we can assume in the following that  the polynomial in \eqref{modcont} boils down to  a constant  which has to be $B(x_0)$ as $x_0$ is a Brjuno number (recall that Brjuno numbers are Lebesgue points). 

Let us now check that the same argument as in the proof of Proposition \ref{propirreg}  yields  an irregularity result at points $x_0$ for which $\tau (x_0) >2$. Recall  
that an irrational  point $x_0$ is $\tau$-{\em well approximable}  if 
\[ \left| \x
-r_n \right| \leq \frac{1}{q_n^{\tau}}, 
\]
for infinitely many $n$s. 

\BL\label{lem:wapproximable}
Let $\tau>2$. If $x_0$ is $\tau$-well approximable, then 
$\theta(\rho) = \tfrac18\rho^{1+1/\tau}$ is not a modulus of continuity of $B$ at $x_0$ (so that 
$ 
h^1_B(x_0)\le {1}/{\tau}$).
\EL
\begin{proof}
Assume that $\theta(\rho) =\tfrac18 \rho^{1+1/\tau}$ is a modulus of continuity of $B$ at $x_0$. We pick $h_n=1/q_n^\tau$.    
As above 
$ C_B(1/q_n^\tau,r_n) \sim \log(2)/q_n$ when $n\to +\infty$. 
We apply Lemma \ref{lem3} with $a= h_n $, $b=r_n$ and 
$\rho_n = |x_0-r_n|+h_n$ so that $\rho_n \le 2/q_n^\tau$. We get 
$1/2 \le  2^{1+1/\tau}/8,$
hence a contradiction.  
\end{proof}
 
Recall that an irrational  number $x_0$ is Diophantine if $\tau (x_0) < \infty$; Liouville numbers are the irrational numbers that are not Diophantine.
It follows from  Lemma \ref{lem:wapproximable} that  {\em the $1$-exponent  of the Brjuno function vanishes at   
Liouville numbers.}
Moreover if $x_0$ is such that $\tau(x_0)>2$, then Lemma \ref{lem:wapproximable} gives  
\[
h^1_B(x_0) \le  \frac{1}{\tau(x_0)}. 
\]

\subsection{Pointwise regularity of the Brjuno function }\label{sec:regularity-B}
We now prove regularity for $B$ at  Diophantine numbers of $X = (0,1) \setminus \Q$. 

We begin by recalling  basic points about the continued fraction expansion of irrational numbers.
First, the Gauss map  $A$: $X \rightarrow X$ is defined by 
 \[ 
 A(x)  = \left\{   \frac{1}{x}  \right\}  
 \]
where $\left\{   x  \right\}= x -\lfloor x\rfloor $ denotes the fractional part of $x$ and $\lfloor x\rfloor $ its integer part of $x$. For $n\in \N$, we denote by $A^n$ the $n$-th iterate of $A$. 
\[ \mbox{If} \;   x\in X \;  \mbox{and }   n\ge 0,   \quad 
A^n(x) = \frac{p_{n-1}(x) - x \; q_{n-1}(x)}{q_n(x) x -p_n(x)} \;  \mbox{and}  \; 
 \lfloor A^n(x)\rfloor=a_n(x),
\] 
see e.g. \cite{billingsley-1965} p. 40-41.
 
We will denote by 
$
\cgot[b_1,\ldots,b_k]$ the open sub-interval of (0,1) with endpoints
$[0;b_1,\ldots,b_k]$ and $[0;b_1,\ldots,b_{k-1},b_k+1]$.  These intervals are called {\em cylinders of order} $k$. 
Note that in a cylinder of order $k$,  $A^k$ is continuous, and  for all $j\le k$ the functions $a_j$, $p_j$ and $q_j$ are constant. 
For $x\in X$,  let
\[
\beta_n(x)= |xq_n(x)-p_n(x)| 
\]
and  
\[  \gamma_n (x) =  \beta_{n-1}(x) \log \left( \frac{1}{A^n (x) }\right)
\]
so that 
\[ B(x) =  \sum_{n=0}^\infty\; \gamma_n (x) . 
\]
We have  
\[
\beta_n(x)= \frac{1}{q_{k+1}(x)+A^{k+1}(x) q_k(x)}, 
\] 
from which we get the  well-known bounds 
(see e.g \cite{billingsley-1965} p.42)
\BE
\label{enca-betan} 
\frac{1}{2 q_{n+1}(x)} \le \beta_n(x) \le  \frac{1}{q_{n+1}(x)}. 
\EE
It follows that 
\BE\label{taun-qn}
q_{n+1}(x) \le q_n(x)^{\tau_n(x)-1}. 
\EE
Let us also recall (see Proposition 1 of \cite{BaMa}) that for $k\ge 1$, 
\BE
\label{majo-gamma}
\frac{\log(q_{k+1}(x))}{q_k(x)}- \frac{\log(2 q_k(x))}{q_k(x)}\le \gamma_k(x) 
\le \frac{\log(q_{k+1}(x))}{q_k(x)}.
\EE

Let $x_0\in X$. In the sequel, $a_k,p_k,q_k$ denote the value at $x_0$ of the functions $a_k,p_k,q_k$. We need to estimate the integrals
\[
\int_I \gamma_k(t)\,  dt,  
\]
where 
\BE\label{inti} 
I = (x_0-\rho/2;x_0+\rho/2) \quad \mbox{ with} \quad \rho>0.  
\EE
These estimates will depend on an integer $K$ which is defined as follows: 
$K\; (=K(I))$ is  the largest integer   such that 
\BE\label{eq:def-K}
 I\subseteq \cgot[a_1,\ldots,a_K]. 
 \EE
We also denote by $\{F_k\}_{k\ge0}$ the sequence of Fibonacci numbers (i.e. $F_0=F_1=1$, $F_{n+2}=F_{n+1}+F_n$).   

\begin{lem}\label{esti-integrals}
Let $x_0\in X$, let $\rho$ be such that $0<\rho<e^{-2}$ with  $x_0-\rho/2$ and $x_0+\rho/2$ irrational. Let $I$ be the interval  be given by \eqref{inti} and $K$ the integer defined by \eqref{eq:def-K}. There exists an absolute constant $C >0$ such that for $K\ge 1$, 
\begin{align}
\forall k<K,\quad  &  \int_I\big| \gamma_k(x)-\gamma_k(x_0) \big| dx \leq C  q_{k+1} \rho^2, \label{int-kpetit}\\
  & \int_I \big| \gamma_K(x)-\gamma_K(x_0) \big| dx  \leq C q_{K+1} \rho^2\log(q_{K+1}\big),
   \label{int-kventral}\\
\forall k>K, \quad &   \int_I \gamma_k(x)dx \leq C  \frac{\rho}{F_{k-K}} \Big( \frac{\log(1/\rho)}{q_{K+1}} +\rho^{1/2}\Big). \label{int-kgrand}
\end{align}
\end{lem}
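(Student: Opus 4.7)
For \eqref{int-kpetit} ($k<K$): I would differentiate $\gamma_k=\beta_{k-1}\log(1/A^k)$ on $I$, exploiting that on the cylinder $\cgot[a_1,\ldots,a_K]\supset I$ the integers $p_j,q_j,a_j$ are constants for $j\le K$. From $A^j=\beta_j/\beta_{j-1}$ and the telescoping product $\prod_{i=0}^{j-1}A^i=\beta_{j-1}$ one has $(A^j)'=(-1)^j/\beta_{j-1}^2$, while $\beta_{j-1}$ is affine with slope $\pm q_{j-1}$. A direct computation then gives $\gamma_k'(x)=\pm q_{k-1}\log(1/A^k(x))\mp 1/\beta_k(x)$. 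Since $I\subset\cgot[a_1,\ldots,a_{k+1}]$, the identity $\beta_k(x)=1/(q_{k+1}+A^{k+1}(x)q_k)$ gives $1/\beta_k\le 2q_{k+1}$, and the elementary bound $q_{k-1}\log(q_{k+1}/q_k)\le q_{k+1}$ (from $\log t\le t-1$ and $q_{k-1}\le q_k$) controls the logarithmic term, yielding $|\gamma_k'|\le Cq_{k+1}$ uniformly on $I$. The mean value inequality then gives \eqref{int-kpetit}.

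For \eqref{int-kventral} ($k=K$): the same derivative formula holds, but $1/\beta_K(x)$ blows up as $x\to p_K/q_K$, an endpoint of the cylinder. I would split $I=I_1\sqcup I_2$ with $I_1=I\cap\cgot[a_1,\ldots,a_{K+1}]$; on $I_1$ the preceding argument applies and contributes at most $Cq_{K+1}\rho^2$. On $I_2$, I would use the change of variable $y=A^K(x)$ (with $|dx|=\beta_{K-1}(x)^2\,dy$) to rewrite the integrals involving $\gamma_K$ and $\gamma_K(x_0)$ as integrals over $A^K(I_2)\subset(0,1)$. The maximality of $K$ forces $\rho\gtrsim 1/q_{K+1}^2$, from which it follows that $\inf_{A^K(I_2)}y\gtrsim 1/q_{K+1}^2$ and hence $\log(1/y)\le 2\log q_{K+1}$ on $A^K(I_2)$. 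Combined with $\beta_{K-1}\asymp 1/q_K$ and $|A^K(I_2)|\lesssim q_K^2\rho$, this produces the extra $\log q_{K+1}$ factor appearing in \eqref{int-kventral}.

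For \eqref{int-kgrand} ($k>K$): the coefficients $a_{K+1},\ldots,a_k$ now vary across $I$, so the direct approach fails. I would invoke the cocycle relation $\beta_n(x)=\beta_{K-1}(x)\beta_{n-K}(A^K(x))$ (again via telescoping $A^j=\beta_j/\beta_{j-1}$ with $\beta_{-1}\equiv 1$), which gives $\gamma_k(x)=\beta_{K-1}(x)\,\gamma_{k-K}(A^K(x))$. The change of variable $y=A^K(x)$ and the bound $\beta_{K-1}\le 1/q_K$ reduce \eqref{int-kgrand} to estimating
\[
\int_J \gamma_m(y)\,dy\le Cq_K^3\frac{\rho}{F_m}\Bigl(\frac{\log(1/\rho)}{q_{K+1}}+\rho^{1/2}\Bigr),
\]
where $m=k-K\ge 1$ and $J=A^K(I)\subset(0,1)$ has length $\asymp q_K^2\rho$. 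Using $\gamma_m(y)\le\log(q_{m+1}(y))/q_m(y)$ from \eqref{majo-gamma} together with the Fibonacci bound $q_m(y)\ge F_m$, the remaining task is to bound $\int_J\log(q_{m+1}(y))\,dy$. Splitting $J$ into its intersections with the Gauss cylinders $(1/(j+1),1/j)$, the part near $0$ (corresponding to $x$ close to $p_K/q_K$) produces the $\log(1/\rho)/q_{K+1}$ contribution via the lower bound $\inf J\gtrsim 1/q_{K+1}^2$, while a Cauchy--Schwarz estimate against a uniform bound on $\int_0^1 \log^2 q_{m+1}(y)\,dy$ produces the $\rho^{1/2}$ term. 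The main obstacle will be this last step: controlling $\log q_{m+1}(y)$ over a general sub-interval $J\subset(0,1)$ is delicate because $\log q_{m+1}$ is itself unbounded near rationals of small height inside $J$, and achieving the Fibonacci-exponential decay in $k-K$ requires careful bookkeeping of how the measure of sub-cylinders shrinks at each level.
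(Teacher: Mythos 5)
Your derivation of \eqref{int-kpetit} is sound and is essentially the content of Proposition 7 of \cite{BaMa}, which the paper simply imports: for $k<K$ the interval $I$ lies in a cylinder of order $k+1$, so $\log(1/A^k)$ is bounded by $\log(a_{k+1}+1)$ there, the derivative formula you write is correct, and $|\gamma_k'|\le Cq_{k+1}$ together with the mean value inequality gives the claim.

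The other two estimates contain genuine gaps. For \eqref{int-kventral}, both quantitative claims you lean on are false in general. Maximality of $K$ only forces $\rho/2>\delta_{K+1}$, and by \eqref{enca-distance} this yields $\rho\ge 1/(q_{K+2}q_{K+3})$ at best, which can be far smaller than $1/q_{K+1}^{2}$ (take $a_{K+2}$ huge). Likewise $\inf A^K(I_2)$ is of order $1/n$, where $n$ is the index of the extreme order-$(K+1)$ sub-cylinder met by $I$, and $n$ is not controlled by $q_{K+1}$ (take $x_0+(-1)^{K-1}\rho/2$ very close to $p_K/q_K$, so that $n$ is arbitrarily large while $q_{K+1}\le 2q_K$). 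The dichotomy that actually works, and that the paper takes from \cite{BaMa}, is on the indices $m\le a_{K+1}\le n$ of the extreme sub-cylinders: if $n\ge 2m+1$ one gets the weaker but sufficient bound $\rho\ge 1/(24\,q_Kq_{K+1})$, which converts the integral bound $\int_I\gamma_K\le C\rho\log(q_{K+1})/q_K$ of Proposition 9 of \cite{BaMa} into $Cq_{K+1}\rho^2\log(q_{K+1})$; if $n\le 2m$ then $q_{K+1}(x)\le 2q_{K+1}$ on \emph{all} of $I$ and your derivative argument applies directly, with no need to excise $I_2$. For \eqref{int-kgrand}, your renormalization $\gamma_k(x)=\beta_{K-1}(x)\,\gamma_{k-K}(A^K(x))$ is correct and is indeed the mechanism behind \cite{BaMa}, but the Cauchy--Schwarz step fails as stated: $\int_0^1\log^2\!\big(q_{m+1}(y)\big)\,dy$ is not uniformly bounded in $m$ (it grows like $m^2$, since $q_{m+1}(y)\ge F_{m+1}$ everywhere), so at best you would get $m/F_m$ in place of $1/F_m$, and you have not actually produced the $\rho^{1/2}$ gain. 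The paper obtains the two terms of \eqref{int-kgrand} from the dichotomy $E=2$ versus $E\ge 3$ (with $E=n-m+1$ as in \eqref{eq:def-E}): for $E=2$, inequality (43) of \cite{BaMa} gives the term $\rho\log(1/\rho)/(q_{K+1}F_{k-K})$, while for $E\ge 3$ the sub-cylinder summation $\int_I\gamma_k\le 6(n-m)/(q_K^3F_{k-K}m^2n)$ combined with the lower bound $\rho\ge(n-m)/(12q_K^2mn)$ yields $C\rho^{3/2}/F_{k-K}$. The sub-cylinder bookkeeping you flag as the ``main obstacle'' is precisely this step, and it is not optional.
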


\begin{proof}
The bound \eqref{int-kpetit} is exactly Proposition 7 of \cite{BaMa}.  Propositions 9 and 10 of the same paper give bounds for the integrals $ \int_I \gamma_k(x)dx$ for $k\ge K$, however  \eqref{int-kventral} and  \eqref{int-kgrand} cannot be directly derived from them, but  will be a consequence of their 
proofs. Let us  recall  the  notations of \cite{BaMa}.
The endpoints of the interval $\cgot[a_1,\ldots,a_K]$ are 
\[
\frac{p_K}{q_K} \quad \textrm{ and } \quad \frac{p_K+p_{K-1}}{q_K+q_{K-1}}. 
\] 
Up to some subset of  $\Q$, $\cgot[a_1,\ldots,a_K]$ is the union of the cylinders 
$\cgot[a_1,\ldots,a_K,n]$ over $n\ge1$. Any element $x$ of $\cgot$ has a unique representation 
\[
x = \frac{sp_K+p_{K-1}}{sq_K+q_{K-1}}\quad  \textrm{ with }\quad  s \in]1;+\infty[. 
\]  
We set 
\[
x_0 +(-1)^{K}\rho/2 = \frac{up_K+p_{K-1}}{uq_K+q_{K-1}} \textrm{ and }x_0 +(-1)^{K-1}\rho/2 = \frac{vp_K+p_{K-1}}{vq_K+q_{K-1}},
\]
and 
\BE\label{eq:def-mn}
m = [u] \textrm{ and } n=[v],
\EE
 so that 
$
1\le m \le a_{K+1}\le n$. By maximality of $K$, we have $n>m$.
Inequality (40) of \cite{BaMa} gives  
\BE
\label{mino-h}
\rho\ge \frac{v-u}{6q_K^2mn}. 
\EE
Let us now prove  \eqref{int-kventral}. We distinguish two cases. 

First, suppose that $n\ge 2m+1$. Then  
 $v-u\ge (n-m)/2$ and we obtain from \eqref{mino-h} that 
\BE\label{mino-hcas1}
\rho\ge \frac{1}{24mq_K^2} \ge \frac{1}{24a_{K+1}q_K^2}\ge \frac{1}{24q_Kq_{K+1}}. 
\EE
Proposition 9 of \cite{BaMa} and  \eqref{majo-gamma} give
\BE\label{eq:majoBaMa-P9}
 \int_I  \gamma_K(x) \le C \rho \frac{\log(q_{K+1})}{q_K}.   
\EE
From this we deduce 
\begin{align*}
 \int_I \big| \gamma_K(x)-\gamma_K(x_0) \big| dx
& \le \int_I \gamma_K(x)dx + \rho \gamma_K(x_0) \\
& \leq C  \frac{\rho}{q_{K}}\log(q_{K+1}) \leq C \rho^2 q_{K+1} \log(q_{K+1}),
\end{align*}
where the last inequality comes from \eqref{mino-hcas1}.

Suppose now that $m\le n \le 2m$. If $x\in I$, the derivative of  $\gamma_K$ satisfies
\[
\gamma_K'(x)= (-1)^{K-1}\Big( q_{K-1}(x) \log(1/A^K(x))+\beta_K(x)^{-1}\Big),
\]
so that 
\[
|\gamma_K'(x)| \leq C q_{K+1}(x). 
\]
For $x \in I $, there exists $m\le \ell \le n$ such that $x\in\cgot[a_1,\ldots,a_K, \ell]$ which yields 
\[
q_{K+1}(x) = \ell q_K + q_{K-1} \le n q_{K}+q_{K-1} \le 2 a_{K+1}q_K+q_{K-1}
\le 2 q_{K+1}. 
\]
By the  mean-value theorem, 
\[
\int_I \big| \gamma_K(x)-\gamma_K(x_0) \big| dx
\leq C q_{K+1} \rho^2,
\]
and the case $k=K$ is settled. 
\smallskip 

It remains to consider the case $k>K$. Let 
\BE\label{eq:def-E} 
E=n-m+1.
\EE If $E=2$,  inequality (43) of \cite{BaMa} gives 
\BE\label{eq:majoBaMa-P10a}
\int_I \gamma_k(x)dx  \leq  \frac{2e }{q_{K+1} F_{k-K}}\rho\log(1/\rho). 
\EE
If $E\ge 3$, then $v-u\ge (n-m)/2$ so that \eqref{mino-h} gives  
$
\rho \ge \frac{n-m}{12 q_K^2 mn}.
$
Using 
\BE\label{eq:majoBaMa-P10b}
\int_I \gamma_k(x)dx
\le 6 \frac{n-m}{q_K^3 F_{k-K} m^2n} 
\EE
(see p. 213 of \cite{BaMa}),  it follows that
\[
\int_I \gamma_k(x)dx
\le \frac{(12 \rho)^{3/2}}{F_{k-K}} \Big( \frac{n}{(n-m)m} \Big)^{1/2}
\le \frac{(12 \rho)^{3/2}}{F_{k-K}}. 
\]
\end{proof}

We are now able to prove the following result.

\begin{prp} \label{propder}
 Let $x_0$ be a Diophantine  number, and $\eps>0$.  There exists $C=C(x_0)>0$ and $\rho_0=\rho_0(x_0,\eps)>0$ such that, if  $0<\rho<\rho_0$, then 
 \BE\label{B-taueps}
\frac{1}{\rho} \int_{x_0-\rho/2}^{x_0+\rho/2}|B(x_0)-B(x)| dx \leq C\rho^{1/(\tau(x_0)+\eps)}\log(1/\rho).
 \EE
\end{prp}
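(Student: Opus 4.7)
The plan is to insert the pointwise decomposition $B(x) - B(x_0) = \sum_{k=0}^{\infty}(\gamma_k(x) - \gamma_k(x_0))$ into the integral and control each term by the three regimes of Lemma \ref{esti-integrals}. Fix $\eps>0$ and write $\tau=\tau(x_0)$. Because $x_0$ is Diophantine, the definition of $\tau(x_0)$ together with \eqref{enca-betan} produces an integer $N_0=N_0(x_0,\eps)$ such that $q_{n+1}\le q_n^{\tau+\eps-1}$ for every $n\ge N_0$. I take $\rho_0$ small enough that, for every $0<\rho<\rho_0$, the integer $K=K(I)$ satisfies $K\ge N_0$; this is legitimate because the cylinder-length inequality $\rho\le |\cgot[a_1,\ldots,a_K]|\le 1/q_K^{2}$ (which gives $q_K\le\rho^{-1/2}$) forces $K\to\infty$ as $\rho\to 0$.

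The key geometric input is a lower bound on $q_{K+1}$. Since $K$ is maximal, $I\not\subseteq\cgot[a_1,\ldots,a_{K+1}]$, so $\rho/2$ is at least the distance from $x_0$ to the nearest endpoint of the sub-cylinder $\cgot[a_1,\ldots,a_{K+1}]$. One endpoint is the convergent $p_{K+1}/q_{K+1}$, at distance $q_{K+1}^{-\tau_{K+1}(x_0)}$; combining this with $\tau_{K+1}(x_0)\le\tau+\eps$ yields
\[
q_{K+1}\ge (2/\rho)^{1/(\tau+\eps)}.
\]
(When $I$ exits the sub-cylinder only on the semi-convergent side, a separate argument using the total length of two adjacent sub-cylinders gives instead $q_{K+1}\le\sqrt{2/\rho}$, which as we shall see is already enough to make the $k=K$ contribution $O(\rho^{3/2})$.)

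With these two estimates I sum. For $k<K$ the bound \eqref{int-kpetit} together with the at-least-Fibonacci growth of $(q_k)$ gives $\sum_{k<K}\int_I|\gamma_k(x)-\gamma_k(x_0)|\,dx\le Cq_K\rho^{2}\le C\rho^{3/2}$. For $k>K$ the factor $1/F_{k-K}$ in \eqref{int-kgrand} makes the series summable, producing $C\rho\log(1/\rho)/q_{K+1}+C\rho^{3/2}$, which by the lower bound on $q_{K+1}$ is $\le C\rho^{1+1/(\tau+\eps)}\log(1/\rho)$. The remaining centering term $\rho\sum_{k>K}\gamma_k(x_0)$ is handled with \eqref{majo-gamma}: the Diophantine condition forces $\log(q_{k+1})/q_k$ to decay geometrically, so this tail is bounded by $C\rho\log(q_{K+1})/q_{K+1}\le C\rho^{1+1/(\tau+\eps)}\log(1/\rho)$.

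The main obstacle is the central term $k=K$. The composite bound \eqref{int-kventral} gives only $Cq_{K+1}\rho^{2}\log q_{K+1}$; inserting the best a-priori bound $q_{K+1}\le q_K^{\tau+\eps-1}\le\rho^{-(\tau+\eps-1)/2}$ produces $C\rho^{(5-\tau-\eps)/2}\log(1/\rho)$, which is too weak as soon as $\tau>2$. To fix this I reopen the proof of Lemma \ref{esti-integrals} and keep its two subcases separate. In the subcase $n\ge 2m+1$, the sharper intermediate estimate $C\rho\log(q_{K+1})/q_K$ holds, and the subcase inequality \eqref{mino-hcas1} combined with the Diophantine relation $q_{K+1}\le q_K^{\tau+\eps-1}$ forces $q_K\ge C\rho^{-1/(\tau+\eps)}$, producing exactly $C\rho^{1+1/(\tau+\eps)}\log(1/\rho)$. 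In the subcase $m\le n\le 2m$, the mean-value-theorem argument gives $Cq_{K+1}\rho^{2}$, and the relation $q_{K+1}\le 2mq_K$ together with the two-sub-cylinder length bound forces $q_{K+1}\sqrt{\rho}\le C$, so this contribution is again $\le C\rho^{3/2}$. Adding the four contributions and dividing by $\rho$ yields \eqref{B-taueps}.
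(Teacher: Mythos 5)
Your strategy is the paper's: the same decomposition of $\int_I|B-B(x_0)|\,dx$ into the ranges $k<K$, $k=K$, $k>K$ plus the centering term $\rho\sum_{k>K}\gamma_k(x_0)$, controlled by Lemma \ref{esti-integrals}, with an upper bound on $q_{K+1}$ driving the head and a lower bound on $q_{K+1}$ driving the tail. Two steps, however, do not go through as written. First, you establish the lower bound $q_{K+1}\ge(2/\rho)^{1/(\tau+\eps)}$ only when the endpoint of $\cgot[a_1,\ldots,a_{K+1}]$ that $I$ crosses is the convergent $p_{K+1}/q_{K+1}$. In the semi-convergent case you derive only the \emph{upper} bound $q_{K+1}\le\sqrt{2/\rho}$ and note that it disposes of the $k=K$ term; but the tail $C\rho\log(1/\rho)/q_{K+1}$ coming from \eqref{int-kgrand} and the centering term $C\rho\log(q_{K+1})/q_{K+1}$ still need the \emph{lower} bound, which you never prove in that case. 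The bound is in fact true there (if the semi-convergent is the nearer endpoint then $|x_0-p_{K+1}/q_{K+1}|\ge\tfrac12|\cgot[a_1,\ldots,a_{K+1}]|$ forces $q_{K+2}\le 4q_{K+1}$, after which one argues through $\delta_{K+1}\ge 1/(2q_{K+2}q_{K+3})$ as in \eqref{enca-distance}), but this is precisely the delicate point — the paper's dichotomy is on $a_{K+2}=1$ versus $a_{K+2}\ge 2$, not on which endpoint is closer — and your write-up skips it.

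Second, the ``main obstacle'' you see at $k=K$ is an artifact of the crude bound $q_K\le\rho^{-1/2}$. Since $I\subseteq\cgot[a_1,\ldots,a_K]$ and $p_K/q_K$ is an endpoint of that cylinder, one has $q_K^{-\tau_K}=|x_0-p_K/q_K|\ge\rho/2$, i.e.\ $q_K\le(2/\rho)^{1/\tau_K}$; combined with \eqref{taun-qn} this gives $q_{K+1}\le(2/\rho)^{1-1/\tau_K}$, and the composite bound \eqref{int-kventral} then yields $C\rho^{1+1/\tau_K}\log(1/\rho)$ directly — no reopening of Lemma \ref{esti-integrals} is needed. Moreover your repair of the subcase $m\le n\le 2m$ is incorrect: $q_{K+1}\sqrt{\rho}$ is \emph{not} bounded there (take $a_{K+1}=m$ large and $n=2m$: then $\rho\asymp 1/(mq_K^2)$ while $q_{K+1}\asymp mq_K$, so $q_{K+1}\sqrt{\rho}\asymp\sqrt{m}$), so the claimed $O(\rho^{3/2})$ for that subcase fails; the contribution is only $O(\rho^{1+1/(\tau+\eps)})$, and seeing even that again requires $q_{K+1}\le q_K^{\tau_K-1}$ together with $q_K^{-\tau_K}\ge\rho/2$. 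A smaller slip: $q_K\le\rho^{-1/2}$ does not ``force $K\to\infty$''; what does is that $\delta_k>0$ for each fixed $k$, so that $I\subseteq\cgot[a_1,\ldots,a_k]$, hence $K\ge k$, as soon as $\rho<2\delta_k$.
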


From \eqref{B-taueps} we deduce that,  if $x_0$ is  Diophantine, then  for every $\eps$ such that $0<\eps<1/2$, $h^1_B(x_0)\ge 1/(\tau(x_0)+\eps) $, and consequently $h^1_B(x_0)\ge \frac{1}{\tau(x_0)}$ (as $\tau(x_0)\ge2)$) which ends the proof of Theorem \ref{thm:SpecB}. We now prove Proposition \ref{propder}.

\begin{proof}
As the set of irrational numbers is dense in $\R$, we may  assume that $x_0\pm \rho/2$ are both irrational. 
Let $\eps>0$. There exists an integer $K_0=K_0(x_0,\eps)$ such that 
 \BE
 \label{majo-tauK}
\forall K\ge K_0, \qquad \tau_K(x_0)\le \tau(x_0)+\eps.
 \EE
 We will note $\tau_k(x_0)=\tau_k$ and $\tau(x_0)=\tau$. 
Following \cite{BaMa},  $\delta_k=\delta_k(x_0)$ will denote the distance from $x_0$ to 
the endpoints of $\cgot[a_1,\ldots,a_k]$, i.e.
\[
\delta_k = \min\Big( \Big| x_0-\frac{p_k}{q_k}\Big|, \Big|x_0-\frac{p_k+p_{k-1}}{q_k+q_{k-1}} \Big| \Big).  
\]
Proposition 4 of \cite{BaMa} gives 
\BE
\label{enca-distance}
\delta_k \le \frac{1}{q_kq_{k+1}} \quad \textrm{ and } \quad 
\delta_k\ge \
\begin{cases}
 \frac{1}{2q_{k+1}q_{k+2}} & \textrm{ if }a_{k+1}=1,\\
 \frac{1}{2q_{k}q_{k+1}} & \textrm{ if }a_{k+1}\ge 2.
\end{cases}
\EE
Let $K= K(x_0,\rho)$ be the largest integer such that $I = (x_0-\rho/2;x_0+\rho/2)$ is included in $\cgot[a_1,\ldots,a_K]$.  We have 
 \[
\frac{1}{2q_{K+2}q_{K+3}}\le \delta_{K+1}< \rho/2
\]
so that $K\to +\infty$ when $\rho\to 0$. Let $0<\rho_0<e^{-2}$ be such that for all  $0<\rho<\rho_0$,
$K\ge \max(K_0,1)$ and let us evaluate for $\rho<\rho_0$, 
\[
 \int_I |B(x_0)-B(x)| dx \le 
 \sum_{k\le K} \int_I |\gamma_k(x_0)-\gamma_k(x)|dx
 + \rho\sum_{k>K} \gamma_k(x_0) + \sum_{k>K}\int_I \gamma_k(x)dx.  
\]
Using  \eqref{int-kpetit} and \eqref{int-kventral}, and since  the sequence $\{q_k\}_{k\ge0}$ grows (at least) exponentially, it follows that   
\begin{align*}
 \sum_{k\le K} \int_I |\gamma_k(x_0)-\gamma_k(x)|dx&
 \leq C \rho^2\Big( \sum_{k<K} q_{k+1} + q_{K+1}\log(q_{K+1})\Big)\\ 
 &\leq C \rho^2 q_{K+1}\log(q_{K+1}). 
\end{align*} 
Now, using \eqref{int-kgrand}, we get 
\begin{align*}
 \sum_{k>K}\int_I \gamma_k(x)dx&
 \leq C \rho \Big( \frac{\log(1/\rho)}{q_{K+1}}+ \rho^{1/2}\Big)
 \sum_{k>K}\frac{1}{F_{k-K}}\\
 &\leq C \rho \Big( \frac{\log(1/\rho)}{q_{K+1}}+ \rho^{1/2}\Big).
\end{align*}
(because  the sequence $\{F_k\}_{k\ge0}$ of Fibonacci numbers grows exponentially). 

Since $x_0$ is Diophantine,  $\tau(x_0) < \infty$; therefore the sequence $(\tau_k)_{k\ge0}$ is bounded. Using \eqref{taun-qn} and \eqref{majo-gamma}, we get for $k>K$, 
\[  |   \gamma_k (x_0) |   \le   \frac{\log(q_{k+1})}{ q_k} \leq  \frac{\log(q_{k}^{\tau_k-1})}{q_k}
\leq C \frac{\log(q_k)}{q_k},
\] 
where $C$ depends on $x_0$. 
We deduce from this and again from the exponential growth of $\{q_k\}_{k\ge0}$ that 
\[
\sum_{k>K} \gamma_k(x_0) \leq C\, \frac{\log(q_{K+1})}{q_{K+1}}.
\]
Collecting these estimates we get  
\BE\label{prefinal}
 \int_I |B(x_0)-B(x)| dx
 \leq C \rho \Big( \rho\, q_{K+1} \log(q_{K+1}) + \frac{\log(1/\rho)}{q_{K+1}}+ \rho^{1/2} \Big).
\EE
According to \eqref{taun-qn} 
\[ 
q_{K+1} \le q_K^{\tau_{K}-1}= \Big| x-\frac{p_K}{q_K} \Big|^{(1-\tau_K)/\tau_K}
\le \rho^{-1+1/\tau_K},
\]
from which we deduce that
$
\log(q_{K+1}) \leq C \log(1/\rho).
$
If $a_{K+2}\ge 2$, according to \eqref{enca-betan}  and  \eqref{enca-distance},
\[
\frac{1}{q_{K+1}^{\tau_{K+1}}}=
\Big|x- \frac{p_{K+1}}{q_{K+1}} \Big|\le \frac{1}{q_{K+1}q_{K+2}} 
\le 2 \delta_{K+1} <\rho, 
\]
and if $a_{K+2}=1$ we get in the same way 
\[
\frac{1}{q_{K+1}} \le 
 \frac{2}{q_{K+2}} \le 2 \rho^{1/\tau_{K+2}}. 
\]
Inserting these  estimates in \eqref{prefinal}, 
we finally get 
\[
 \int_I |B(x_0)-B(x)| dx \leq C 
 \rho\Big(\rho^{1/\tau_{K}}+\rho^{1/\tau_{K+1}}+\rho^{1/\tau_{K+2}}\Big)\log(1/\rho) ,   
\]
(note that, since $\tau_K  \geq 2$, the term $\rho^{1/2}$ in \eqref{prefinal} is not needed)
and the conclusion follows  from \eqref{majo-tauK}.
\end{proof}

We now turn to the proof of Corollary \ref{corspec}. It follows  from  a precised version of Jarnik Theorem (which, initially, yields the Hausdorff dimensions of the sets of points with  a given irrationality exponent). In order to state it, we need to recall the following notion of modified Hausdorff measure.

\BD  \label{defmeshaus} Let   $A\subset \RR$. 
If $\varepsilon>0$  and  $\delta \in [0,1]$,  we denote 
\[ M^{\delta,\gamma}_{\varepsilon} =\inf_R \;  \left( \sum_{  i} | A_i |^\delta | \log ( | A_i | ) |^\gamma \right) ,\]
where $R$ is an  $\eps$-covering of   $A$, i.e. a covering of  $A$  by bounded sets  $\{ A_i\}_{i \in
\N}$ 
 of diameters  $| A_i | \leq \varepsilon$
(the infimum is therefore taken on all  $\eps$-coverings).
For any $\delta \in [0,1]$ and $\gamma \in \RR$, the 
$(\delta, \gamma )$-dimensional outer  Hausdorff measure of 
$A$ is 
\[ mes^{\delta,\gamma} (A) = \displaystyle\lim_{\eps\rightarrow 0}  M^{\delta,\gamma}_{\eps}. \]
\ED

\BP \label{propjar}
Let $ a, b \in \RR$ such that $a < b$ and 
\[ E_{\tau}=\left\{  x\in [a,b] : \; \left|x -\frac{p}{q}\right| \leq \frac{1}{q^{\tau}} \;
\mbox{ for
 infinitely many couples $(p,q)$ } \right\} ;\]
 then 
 \begin{eqnarray}  \label{jar1}
\dim ( E_{\tau} ) = 2/\tau ,
 \\  \label{jar2}
 mes^{2/\tau, 2} ( E_{\tau}  )  >0. 
\end{eqnarray} 
%
\EP

 Note that the upper bound in (\ref{jar1}) follows immediately using the natural covering by the intervals $[ \frac{p}{q}-\frac{1}{q^{\tau}}, \frac{p}{q}+\frac{1}{q^{\tau}}]$, and (\ref{jar2}) implies the lower bound, so that the only result that requires a proof is (\ref{jar2}). A direct and  elementary way in order to prove (\ref{jar2}) is to follow step by step the proof of Theorems 10.3 and 10.4 of   \cite{Falconer:2003oj}, and check that it actually yields not only a lower bound for the dimension of $E_{\tau}$, but  the more precise result given by (\ref{jar2}). A more conceptual proof consists in noticing that the sets $E_{\tau}$ are {\em limsup sets}, and that, by Dirichlet's theorem, $E_2= \RR$; so that Proposition \ref{propjar} actually  is a particular case of the standard ubiquity techniques, see \cite{Beresnevich:2006ve} and references therein, or Theorem 2 of \cite{Lev}.   
 
 Let us now check how Corollary \ref{corspec} follows from  Theorem \ref{thm:SpecB} and  Proposition \ref{propjar}. Let $a <b $ be given and let 
 \[ F_t =  \{ x\in [a,b]:\;  \tau (x) = t \} . \]
 We need to prove that 
 \BE \forall t \in [2, +\infty], \qquad  \dim  \left( F_t \right) = \frac{2}{t}. 
 \EE 
 
Clearly,
\[ F_{t}= \displaystyle\bigcap_{\tau  < t} E_{\tau }-\displaystyle\bigcup_{\tau  >t}E_{\tau }.\]
It follows that $\forall \tau  < t$, $F_{t} \subset E_{\tau }$, and Proposition \ref{propjar}  implies that $\dim  \left( F_t \right) \leq 2/t$. 
In order to obtain the lower bound, we will prove that $mes^{2/t, 2} ( F_{t}  )  >0$. Indeed, $F_{t} $ contains the set
\[ G_{t}= \ E_{t }-\displaystyle\bigcup_{\tau  >t}E_{\tau }; \]
since the sequence $E_{\tau }$ is decreasing, the union can be rewritten as a countable union of sets, which, by (\ref{jar1}), all have a vanishing 
$mes^{2/t, 2}$ Hausdorff measure, so that $mes^{2/t, 2} (G_{t} )= mes^{2/t, 2} (E_{t})  >0$. \\

\section{Badly approximable numbers}

\label{secbad}

 Theorem \ref{thm:SpecB} can be interpreted as stating   that the slower the sequence $q_n$ increases, the smoother ${B}$ is at $x_0$. 
We  now   prove that, indeed, the points for which the sequence  $q_n$  grows as slowly as possible are the ones where $B$ is the smoothest. 

An irrational number $x_0$ is {\em badly approximable}
if the sequence of $\{a_k\}_{k\ge 0}$ is bounded,  or, equivalently, if 
\[ \exists C >0, \quad \forall p, q \neq 0,  \qquad 
\Big| x_0 - \frac pq   \Big| \ge \frac{C}{q^2}.
\]
It  follows that 
$\tau(x_0)=2$; thus  we already know that 
$
h_B^1(x_0) = {1}/{2}$. 
We now sharpen  this result. Recall that the definition of the modulus of continuity is given by
\eqref{modcont}. 

\BP \label{propbad}
A point  $x_0\in (0,1)$ is badly approximable if and only if  there exists $C>0$ such that $\theta(\rho)=C \rho^{3/2} $ is a modulus of continuity of $B$ at $x_0$. 
\EP 

A consequence  is the optimality of  Proposition \ref{propirreg} (up to the multiplicative constant): Badly approximable numbers have the smallest possible modulus of continuity.   

\begin{proof}  First note that  a function which is a $o(\rho^{3/2} )$ cannot be a modulus of continuity at badly approximable numbers, as a consequence of  Proposition~\ref{propirreg}. We now prove that, for $C$ large enough, $C \rho^{3/2}  $ is a modulus of continuity at such a number. In this proof, the values of $C$ may change from one line to the next, but only depend on $x_0$. 
 We will use  the same notations ($I$, $K$, $E$, $\delta_K$) as in the proofs of Lemma \ref{esti-integrals} and Proposition \ref{propder}. 
First note that 
\BE\label{eq:majoh}
\rho \le \Big|x_0 - \frac{p_K}{q_K} \Big|\le \frac{1}{q_K^2},
\EE
and also, as $x_0$ is badly approximable,  
\BE\label{eq:minoh}  \exists C : \quad
\frac{\rho}{2}> \delta_{K+1} \ge \frac{1}{2q_{K+2}q_{K+3}}
\ge \frac{C}{q_K^2}.  
\EE
According to \eqref{int-kpetit} and  \eqref{eq:majoh}, 

\BE\label{eq:majo0-Ba}
\sum_{k< K} \int_I | \gamma_k(x)-\gamma_k(x_0) |dx 
\le C  \rho^2 q_{K}\le C  \rho^{3/2}. 
\EE 

Let $k>K$. The proof of Proposition 10 of \cite{BaMa} p. 213 contains the following inequality for $E\ge 3$, which actually remains true for $E\ge 2$ : 
\[
\int_I \gamma_k(t)dt \le \frac{2}{q_K^3 F_{k-K}}\sum_{m\le  \ell \le n} \frac{1}{\ell^3}. 
\]
This inequality and \eqref{eq:minoh} directly imply 
\BE\label{eq:majo1-Ba}
\sum_{k>K}\int_I  \gamma_k(x)dx \le C \rho^{3/2}. 
\EE

As 
$
1/A^k(x_0)= a_{k+1}(x_0)+A^{k+1}(x_0), 
$
there exists  $C>0$
such that for all $k\in\N$, $\log(1/A^k(x_0)) \le C$; using the exponential growth of the  $(q_k)_{k\ge 0}$, we get
\BE\label{eq:majo2-Ba}
\rho \sum_{k\ge K}\gamma_k(x_0) 
\le C \rho \sum_{k\ge K} \frac{1}{q_k}
\le C \frac{\rho}{q_{K}} \le C \rho^{3/2}.    
\EE 
\smallskip 

To treat $
\int_I \gamma_K(x)dx$, 
 we use  (39) of \cite{BaMa} :
\BE\label{eq:majo3-Ba}
\int_I \gamma_K(x)dx \le \frac{1}{q_K^3}
\int_{A^K(I)} \log(1/u) du
\le  \frac{1}{q_K^3}
\int_0^{|A^K(I)|} \log(1/u) du \le C\rho^{3/2}, 
\EE
for $u \mapsto \log(1/u)$ is decreasing on $(0,1]$ and $\int_0^1 \log(1/u) du < \infty$. 
Collecting the estimates \eqref{eq:majo0-Ba}, \eqref{eq:majo1-Ba}, \eqref{eq:majo2-Ba} and \eqref{eq:majo3-Ba} 
we obtain that $C \rho^{3/2}$ is a modulus of continuity at $x_0$. 
\medskip 

We now prove that badly approximable points are the only one for which the modulus of continuity is equivalent to $\rho^{3/2}$.  Let $ h_{n} = \left| x_0 - {p_{n}}/{q_{n}}\right| ;  $ 
 $x_0$ is not badly approximable if and only if there exists a subsequence $n(m)$ such that for $m\to \infty$, 
\BE \label{petitoo}  h_{n(m)} =o \left(  \frac{1}{(q_{n(m)})^2}\right) . 
\EE
The proof then follows the one of Proposition \ref{propirreg}: On one hand, \eqref{tranOndB} implies that
\[C_B \left(h_{n(m)},  \frac{p_{n(m)}}{q_{n(m)}} \right) \geq  \frac{1}{4 q_{n(m)}} ;
\]
on other hand, applying Lemma \ref{lem3} with $\rho_{n(m)} = 2 h_{n(m)} $, we obtain that, if  $C\rho^{3/2}$ is a modulus of continuity at $x_0$, then 
\[ \frac{1}{4 q_{n(m)}} \leq \frac{C (\rho_{n(m)})^{3/2}}{h_{n(m)}} \leq C   2^{3/2} (h_{n(m)})^{1/2}, 
 \]
 which contradicts \eqref{petitoo}. 
\end{proof}

\section{Additional pointwise regularity results}\label{sec:addit} 

We start by showing why the pointwise exponent used for positive measures  is not relevant for $B$. Recall that, if   
$\mu$  is  a  positive Radon  measure  defined on $ \R$,  
The  {\em   local dimension} of $\mu$ at  $x_0$ is 
\[
\underline{\dim}_{loc} (\mu, x_0)  = \liminf_{\rho\to 0^+} \frac{\log \mu([x_0-\rho, x_0+\rho ])}{\log \, \rho}. 
\]

The local dimension is  well defined for the Brjuno function; however, it  does not allow to capture  possible changes in its pointwise  regularity.
Indeed, let us check that 
it is constant. 

First, clearly,  $\exists C >0$ such that $ \forall x \in \R$, $B(x) \geq C$ because   there is no cancellation in the series  \eqref{B2}; 
so that $\forall x, $ $\underline{\dim}_{loc} (\mu, x) \leq 1$. 
On  other hand, since  $B \in BMO$, it follows immediately from \eqref{JohnNin} 
that  
 $\forall x, $ $\underline{\dim}_{loc} (\mu, x) \geq 1$.  
 
 A  drawback of  using the local dimension   is that, in contradistinction with the H\"older exponent, two measures $\mu$ and $\nu$ differing by a constant may have different exponents. 
 Therefore this exponent often takes  the value 1, because the definition does not include  (as in the H\"older case) the substraction of an appropriate polynomial. This explains why the $p$-exponent, which allows for this  substraction, is better fitted to measure  variations of  regularity of $B$.

\subsection{H\"older regularity of the primitive of $B$}

\label{secregprimit} 
The proof of Theorem \ref{thm:SpecB}  strongly uses \eqref{BMfond}, which estimates increments of  the primitive of $B$; therefore a natural question is to wonder if it  can yield its {\em  H\"older exponent}. 

\begin{df}\label{df:holponc} Let $f:\R\to\R$ be a locally bounded function, $x_{0}\in\R$ and $\alpha \geq 0$. The function $f$ belongs to $C^{\alpha}(x_0)$ if there exist $C>0$ and a polynomial $P$ of degree less than $\alpha$ such that, for $\rho$ small enough,
\BE \label{defreghol}
\supess_{ | x-x_0| \leq \rho} |f(x) -P(x-x_0)|\leq C \rho^{\alpha}. 
\EE
The {H\"older exponent} of $f$ at $x_0$ is 
\[
h_{f}(x_0)=\sup\{ \alpha\geq 0 \:|\: f \in C^{\alpha}(x_0)\} .
\]
\end{df}

We denote by $\TB$ a primitive of $B$. A lower bound for  $h_{\TB}$ is a consequence of   the following classical result.

\BL\label{lem:mino-exp-primitive} Let  $f \in L^1_{loc} (\RR)$, and denote by $F$ a primitive of $f$. Then \BE \label{resprimitgen} \forall x_0\in \RR, \qquad  h_F (x_0) \geq h^1_f (x_0) +1. \EE
\EL

\begin{proof} We recall the proof for the sake of completeness.  Suppose that $f \in T^1_{\al} (x_0)$, 
let $P$ be the polynomial given by (\ref{deux}), and denote by $Q$ the primitive of $P$ that vanishes at $x_0$. The primitive 
$F (x) = \int_{x_0}^x f(t) dt $ satisfies 
\begin{align*}
\left| F(x)-Q(x) \right| &=  \left| \int_{x_0}^x (f(t) -P(t)) dt \right|\\& \leq\int_{x_0}^x   \left| f(t) -P(t)  \right| dt \leq  C |x-x_0|^{\al +1}, 
\end{align*}
so that $F \in C^{\al +1} (x_0)$.  
\end{proof} 

Note that, in general, equality does not hold in (\ref{resprimitgen}), as shown by the functions $|x|^\al \sin ( |x|^{-\beta})$ for $\al >-1$ and $ \be >0$; we will now check that equality holds everywhere in the case of the Brjuno function. 

\BP\label{propregprimit}
If $x_0 \in \QQ$, then $h_{\TB} (x_0) = 1$. Otherwise, 
\[ h_{\TB} (x_0) = 1+  \frac{1}{\tau (x_0)}.  
\] 
\EP

In order to prove this result, we will need an irregularity criterium   based on finite differences. We note 
\BE \label{diffin} \Delta_2 f(x,h) =2 f\left(x+\frac{h}{2}\right) - f(x+h)-f(x). \EE 

\BL \label{lemdiffde}
Let $f: \RR \rightarrow \RR$ be a continuous function;  let $\al <2$ and $\gamma \geq 0$. Let $x_0 \in \RR$, and assume that there exist  $ \rho_n >0$, 
$h_n$,  and $r_n $ such that 
\[ r_n \in [x_0- \rho_n, x_0 + \rho_n] , \quad  \rho_n \rightarrow 0,  \quad \mbox{ and} \quad \frac{\rho_n}{| \log \rho_n |^\gamma}\leq | h_n | \leq \rho_n . 
\]
\[ \mbox{ If } \quad |  \Delta_2 f(r_n,h_n)|  \geq |h_n|^\al  ,\qquad \mbox{then} \quad h_f (x_0 ) \leq \al. 
\]
\EL

\begin{proof} Clearly, if $f$ is continuous, then the $\supess$ in \eqref{defreghol}  can be replaced by a $\sup$.  Therefore, if $f \in C^\beta (x_0)$ for a $\beta < 2$,  then there exists a polynomial $P$ of degree at most 1 and $r >0$ such that 
\BE \label{gradosup}  \forall x \in [x_0- r, x_0 +r ] , \qquad f(x) = P(x-x_0) + O( | x-x_0|^\beta ). \EE
Using \eqref{gradosup}  for $x=r_n, \; r_n + h_n/2$ and $r_n + h_n$ in \eqref{diffin}, we get
\[ \Delta_2 f(r_n,h_n)  = O (\rho_n^\beta ) = O\big(|h_n|^\beta \big| \log |h_n| \big|^{\beta \gamma } \big) .
\]
Therefore, if $| \Delta_2 f(r_n,h_n) | \geq | h_n|^\al$, then $\forall \be >\al$, $  f \notin  C^\beta (x_0)$. 
\end{proof} 

Let us now prove Proposition \ref{propregprimit}.  The case $x_0\in\Q$ follows from  \eqref{BMfond}. If $x_0\notin \Q$,   \eqref{resprimitgen} and Theorem \ref{thm:SpecB} imply  that $h_{\TB} (x_0) \geq 1+ 1/\tau(x_0)$.  Note that \eqref{difprim2} can be rewritten
 \[ 
 \mbox{If}  \quad  | h | < \frac{2}{3 q^2}, \;  \mbox{then} \qquad  \frac{1}{h} \Delta_2 \TB (r,h)  =  \frac{\log 2}{q}  + { \mathcal O}    \left(  qh  \log \left(\frac{1}{q^2 |h|}\right)  \right) .  
\]
 Let now $\tau \geq 2$ and assume  that $x_0$ is $\tau$-well approximable.  We can assume, by extracting a subsequence if necessary, that
for all  $n\ge 1$, $\big|x_0-p_n/q_n\big| \le 1/q_n^{\tau}$,  and 
we pick for $r$ the sequence of convergents $r_n = p_n/q_n$,   $\rho_n = 1/q_n^\tau$, and $h_n =   1/q_n^\tau (\log q_n)^2$. We obtain that 
 \[ \Delta_2 \TB (r_n,h_n) = \frac{\log 2}{\tau^{2/\tau}}  \; (h_n )^{1+1/\tau} |\log(h_n)|^{2/\tau}\Big(1+o(1)\Big), 
 \]
and Lemma \ref{lemdiffde} implies that $h_{\TB} (x_0) \leq 1+ 1/\tau$. \hspace{4cm} $\square$

\subsection{The $p$-exponent  of $B$ for $p>1$}

\label{secpexps}

 Theorem \ref{thm:SpecB} can be extended to $p$-exponents in the following way :
\BE \label{pexpbrj}
\forall p\ge 1,\; \forall x_0 \in \RR,  \qquad 
 h_B^p(x_0)= h_B(x_0). 
\EE

We  outline the proof of this result. 
Using the John-Nirenberg inequality,
\[ \exists C>0 ,  \forall x,y :  \;\;  |x-y|\leq \frac{1}{2}, \quad { \Big| \int_x^y  B(t)^p   \; dt \Big|^{1/p}}  \leq C |x-y|^{1/p} \log \left( \frac{1}{|x-y| }\right),
\]
so that 
$ \forall x_0 \in (0,1)$, 
$h_B^p(x_0)\ge 0$. 
On other hand,  H\"older's inequality  implies that 
\BE\label{eq:decroissance-p-expo}
h_B^p(x_0) \le h_B^1(x_0).  
\EE

Hence it follows from Theorem 1 that if $x_0$ is a rational or a Liouville number, then  $h_B^p(x_0)=0$.  

Suppose now that $x_0$ is Diophantine. The results obtained in section \ref{sec:regularity-B} are based on the estimates \eqref{int-kpetit}, \eqref{eq:majoBaMa-P9}, \eqref{eq:majoBaMa-P10a} and \eqref{eq:majoBaMa-P10b} from \cite{BaMa}. They extend  as follows: 
Let $\rho$ be such that $0<\rho<e^{-2}$ with  $x_0-\rho/2$ and $x_0+\rho/2$ irrational, $I= (x_0-\rho/2;x_0+\rho/2)$, and $K$, $m$, $n$, $E$ the integers defined by \eqref{eq:def-K}, \eqref{eq:def-mn} and \eqref{eq:def-E}. There exists an absolute constant $C >0$ such that for $K\ge 1$,
\begin{align}
\forall k<K,\quad  & \left( \int_I \big| \gamma_k(x)-\gamma_k(x_0) \big|^p dx\right)^{1/p} \leq C  
q_{k+1} \rho^{1+1/p}, \label{int-kpetit-p}\\
  & \left(\int_I  \gamma_K(x)^p dx\right)^{1/p} \leq C\, \frac{\rho^{1/p} \log(q_{K+1})}{q_K},
   \label{int-kventral-p}\\
\forall k>K, \quad &  \left( \int_I \gamma_k(x)^pdx\right)^{1/p} \leq C \, \frac{\rho^{1/p}\log(1/\rho)}{F_{k-K} q_{K+1}},\textrm{ if } E=2,\label{int-kgrand-p1}\\
&   \left(\int_I \gamma_k(x)^p dx\right)^{1/p} \leq C\, \frac{(n-m)^{1/p}}{F_{k-K} q_K^{1+2/p} m^{1+1/p}n^{1/p}}\textrm{ if } E\ge 3.\label{int-kgrand-p2}
\end{align}
The proofs  follow the same arguments as in the  proofs of Propositions 7, 9, 10 of \cite{BaMa}.
Doing so, easy extensions of Proposition 2 and Lemma 2 of the same paper will be requested. As the method is exactly the same to get these, we give them without proofs  : 
\begin{itemize}
\item Let  $I\subseteq[0;1]$ be an interval of length $h\le e^{-p}$. We have  
\BE\label{eq:gene-majoI}
\forall k\in\N,\qquad  \int_I \gamma_k(x)^p dx \le e^p h\log^p(1/h).  
\EE
\item Let $m,n$ be integers such that $1\le m<n$. We have
\[
\sum_{m\le \ell \le n}\frac{1}{\ell^{p+2}} \le 3 \frac{n-m}{m^{p+1} n}. 
\]
\end{itemize}

Starting from \eqref{int-kpetit-p}, \eqref{int-kventral-p}, \eqref{int-kgrand-p1} and \eqref{int-kgrand-p2}, one obtains the following extension of Lemma \ref{esti-integrals} : Under the same hypothesis there exists an absolute constant $C>0$ such that for $p\ge 1$,  $K\ge 1$, 
\begin{align*}
\forall k<K,\quad  & \Big( \int_I\big| \gamma_k(x)-\gamma_k(x_0) \big|^p dx\Big)^{1/p} \leq C  q_{k+1} \rho^{1+1/p}, \\
  & \Big(\int_I \big| \gamma_K(x)-\gamma_K(x_0) \big|^p dx\Big)^{1/p}  \leq C q_{K+1} \rho^{1+1/p}\log(q_{K+1}),
\\
\forall k>K, \quad &   \Big(\int_I \gamma_k(x)^p dx\Big)^{1/p} \leq C  \frac{\rho^{1/p}}{F_{k-K}} \Big( \frac{\log(1/\rho)}{q_{K+1}} +\rho^{1/2}\Big) .
\end{align*} 
The following extension of Proposition \ref{propder}  follows   : 
If $x_0$ is a Diophantine  number and $\eps>0$,   there exists $C=C(x_0)>0$ and $\rho_0=\rho_0(x_0,\eps)>0$  such that  for $p \ge 1$, $0<\rho<\rho_0$, 
 \BE\label{p-expo-dio}
\left(\frac{1}{\rho} \int_{x_0-\rho/2}^{x_0+\rho/2}|B(x_0)-B(x)|^p dx\right)^{1/p} \leq C\rho^{1/(\tau(x_0)+\eps)}\log(1/\rho).
\EE
From \eqref{p-expo-dio} we infer the lower-bound 
$h_B^p(x_0) \ge 1/\tau(x_0).$
Combined with \eqref{eq:decroissance-p-expo} and Theorem \ref{thm:SpecB}, this yields $h_B^p(x_0) = 1/\tau(x_0).$ 

\section{Concluding remarks  }  \label{sec:conclu}
The present paper  raises the problem of determining if Theorem \ref{thm:SpecB} also applies for  variants of the Brjuno function.

First, $B$  is one example of a family  $B_{\al}$  introduced by J.-C. Yoccoz in \cite{Yo}, and further studied in \cite{LuMaNaNa,MaMoYo}: In the definition of $B$,   the usual continued fraction algorithm  is replaced by $\al$-continued fractions   expansions, see \cite{Naka}. A similar analysis as the one that we performed  could be developed for  $B_{\al}$.  Note that uniform regularity results  for differences of such functions have immediate consequences on their pointwise regularity;  for example,
 $B_{1/2}-B\in C^{1/2}$, cf. Theorem 4.6 of \cite{MaMoYo}; since the $p$-exponents of $B$  belong to $[0, 1/2]$, it follows that $B_{1/2}$  shares the same $p$-exponent as $B$ (except perhaps for badly approximable points where Proposition \ref{propbad} leaves room for a  cancellation between moduli of continuity).
 
Other extensions are proposed in  \cite{LuMaNaNa} where the logarithm in $B$ is replaced by another function. An important subcase consists of choosing $1/x^\beta$ with  $0< \beta <1$. In this case the corresponding Brjuno function does not belong to all $L^p$ spaces and its pointwise exponent can be studied  for a restricted range of $p$s only.  Such a function can be seen as  a fractional derivative of the corresponding Brjuno function (defined with a logarithm); we can therefore expect that (when defined)  its $p$-exponent is $\frac{1}{\tau (x_0)} -\beta$; indeed, this would be true under the assumption that these Brjuno functions only display {\em cusp singularities}  (i.e. if the pointwise regularity exponents  of these functions are shifted by $\beta$ only after a fractional integration of order $\beta$, see \cite{Porqu}), a plausible assumption in view of Proposition  \ref{propregprimit}  which asserts that  it is the case for $B$ itself. 
 
 \smallskip  
 
 The Brjuno function   can be interpreted as the imaginary part of a complex analytic function $\mathcal{B}$, see Section 1.3 of  \cite{MaMoYo2}; a remarkable property of the real part  of $\mathcal{B}$  is that it is  a bounded function which is continuous except at rationals, where it has a left and a right limit. This property   is shared  with some  {\em Davenport series}, which are  of the form $ \sum a_n\omega (nx) $, where  $ \omega (x) =  \{ x\}  - 1/2$ if $x\in\R\setminus\Z$ and  $\omega(x)=0$ else.   If $(a_n) \in l^1$, these series   display  jumps  located at  rational numbers, thus often leading  to a pointwise regularity exponent  related with Diophantine approximation, see \cite{Dave} in which  a multifractal analysis based on the H\"older exponent is  developed, and where  discontinuities at rationals play a key role. This indicates that a multifractal   analysis may also be performed on $Re (\mathcal{B}) $:  Since, for $p \in (1,  \infty )$,  the Hilbert transform does  not modify the value of the $p$-exponents, it follows that \eqref{pexpbrj} (and hence Theorem \ref{thm:SpecB})  also holds for $Re (\mathcal{B}) $; thus all $p$-exponents of $Re (\mathcal{B}) $ coincide for $p >1$, except perhaps for $p = +\infty$. A natural conjecture therefore is that  it is also the case for $p = +\infty$, i.e. that the H\"older exponent of $Re (\mathcal{B}) $ is 
\[
\left\{ \begin{array}{ll} h_{Re (\mathcal{B}) }(x_0)=0 & \textrm{for } x_0\in\Q , \\ & \\  h_{Re (\mathcal{B})} (x_0)=\displaystyle\frac{1}{\tau(x_0)} & \textrm{ otherwise. } 
\end{array}  \right.
\]
The result clearly holds for $x_0\in\Q$, because $Re (\mathcal{B}) $ is discontinuous at rational points. Additionally, since any function satisfies $h_{f} (x_0) \leq h^p_{f} (x_0)$, it follows that  if $ x_0 \notin \Q$,  then $ h_{Re (\mathcal{B})} (x_0)\leq {1}/{\tau(x_0)}$.

\end{document}